\begin{document}

\title{Reproducing Kernel Hilbert Space vs. Frame Estimates
}


\author{Palle E. T. Jorgensen      \and
        Myung-Sin Song 
}


\institute{Palle E. T. Jorgensen \at
Department of Mathematics, The University of Iowa, Iowa City, IA52242, USA \\
              Tel.: +1-319-335-0782\\
              Fax: +1-319-335-0627\\
              \email{jorgen@math.uiowa.edu}           
           \and
           Myung-Sin Song \at
              Department of Mathematics and Statistics, Southern Illinois University Edwardsville, Edwardsville, IL62026, USA \\
    Tel.: +1-618-650-2580\\
              Fax: +1-618-650-3771\\
\email{msong@siue.edu}
}

\date{Received: Mar 2015 / Accepted: date}

\maketitle

\begin{abstract}

We consider conditions on a given system $\mathcal{F}$ of vectors in Hilbert space 
$\mathcal{H}$, forming a frame, which turn $\mathcal{H}$ into a reproducing kernel 
Hilbert space. It is assumed that the vectors in $\mathcal{F}$ are functions on some 
set $\Omega$. We then identify conditions on these functions which automatically give 
$\mathcal{H}$ the structure of a reproducing kernel Hilbert space of functions on 
$\Omega$. We further give an explicit formula for the kernel, and for the corresponding 
isometric isomorphism. Applications are given to Hilbert spaces associated to families 
of Gaussian processes.

\keywords{Hilbert space, frames, reproducing kernel, Karhunen-Lo\`{e}ve} 
 \subclass{42C40, \and 46L60, \and 46L89 \and 47S50}
\end{abstract}


\begin{acknowledgements} The authors are please to acknowledge helpful discussions, both recent and 
not so recent, with John Benedetto, Ilwoo Cho, D. Dutkay, 
Keri Kornelson, Kathy Merrill, P. Muhly, Judy Packer, Erin Pearse, 
Steen Pedersen, Gabriel Picioroaga, Karen Shuman. 
\end{acknowledgements}

\section{Introduction}
\label{sec:1}

A reproducing kernel Hilbert space (RKHS) is a Hilbert space $\mathcal{H}$ of functions 
on a set, say $\Omega$, with the property that $f(t)$ is continuous in $f$ with respect 
to the norm in $\mathcal{H}$. There is then an associated kernel. It is called 
reproducing because it reproduces the function values for $f$ in $\mathcal{H}$. 
Reproducing kernels and their RKHSs arise as inverses of elliptic PDOs, as covariance 
kernels of stochastic processes, in the study of integral equations, in statistical 
learning theory, empirical risk minimization, as potential kernels, and 
as kernels reproducing classes of analytic functions, and in the study of fractals, 
to mention only some of the current applications. They were first introduced in the 
beginning of the 20ties century by Stanisław Zaremba and James Mercer, G\'{a}bor 
Szeg\"{o}, Stefan Bergman, and Salomon Bochner. The subject was given a global and 
systematic presentation by Nachman Aronszajn in the early 1950s. The literature is by 
now vast, and we refer to the following items from the literature, and the papers cited 
there \cite{Aro50}, \cite{AAMN13}, \cite{Yam13}, \cite{NACLC12}, \cite{Wat12}, 
      \cite{HJLLM13}. Our aim in the present 
paper is to point out an intriguing use of reproducing kernels in the study of frames 
in Hilbert space.

\section{An Explicit Isomorphism}
\label{sec:2}

Let $\mathcal{H}$ be a separable Hilbert space, and let 
$\{\varphi_{n}\}_{n \in \mathbb{N}}$ be a system of vectors in $\mathcal{H}$.  Then 
we shall study relations of $\mathcal{H}$ as a reproducing kernel Hilbert space (RKHS) 
subject to properties imposed on the system $\{\varphi_{n}\}_{n \in \mathbb{N}}$.  A
RKHS is a Hilbert space $\mathcal{H}$ of functions on some set $\Omega$ such that for 
all $t \in \Omega$, there is a (unique) $K_{t} \in \mathcal{H}$ with 
$f(t)=\langle K_{t}, f \rangle_{\mathcal{H}}$, for all $t \in \Omega$, for all 
$f \in \mathcal{H}$.  In the theorem below we study what systems of functions
\begin{equation}
\label{eq:2.1.1}
  \varphi_{n} \in \mathcal{H} \cap \{ \text{functions on some set } \Omega\} 
\end{equation}
yield RKHSs; i.e., if $\{\varphi_{n}\}_{n \in \mathbb{N}}$ satisfies (\ref{eq:2.1.1}), 
what additional conditions are required to guarantee that $\mathcal{H}$ is a RKHS?

Given $\{\varphi_{n}\}_{n \in \mathbb{N}} \subset \mathcal{H}$, we shall introduce the 
Gramian $G=(\langle \varphi_{i}, \varphi_{j}\rangle_{\mathcal{H}})$ considered as an 
$\infty \times \infty-$matrix. 

Under mild restrictions on $\{\varphi_{n}\}_{n \in \mathbb{N}}$, it turns out that 
$G$ defines an unbounded (generally) selfadjoint linear operator
\[
  l^{2} \overset{G}{\rightarrow} l^{2}
\]
\begin{equation}
\label{eq:2.1.2}
  (G(c_{j}))_{k}=\sum_{j \in \mathbb{N}} \langle \varphi_{k}, \varphi_{j} 
  \rangle_{\mathcal{H}}c_{j}.
\end{equation}
Let $\mathcal{F}$ denote finitely supported sequence with (\ref{eq:2.1.2}) defined on
all finitely supported sequence $(c_{j})$ $\mathcal{F}$
in $l^{2}$, i.e., $(c_{j}) \in \mathcal{F}$ if and only if there exists 
$n \in \mathbb{Z}_{+}$ such that $c_{j}=0$, for all $j \geq n$; but note that $n$ 
depends the sequences.  Denoting $\delta_{j}$ the canonical basis in $l^{2}$, 
$\delta_{j}(j)=\delta_{i,j}$, note $\mathcal{F}=span\{\delta_{j}|j \in \mathbb{N}\}$.

Further, note that the RHS in (\ref{eq:2.1.2}) is well defined when 
$\sum_{j}\left\vert\langle \varphi_{k}, \varphi_{j} \rangle_{\mathcal{H}}\right\vert^{2}
<\infty$, for all 
$k \in \mathbb{N}$.

\begin{theorem}
\label{T:2.1}
Suppose $\mathcal{H}$, $\{\varphi_{n}\}$ are given.  Assume that
\begin{enumerate} [(a)]
\item each $\varphi_{n}$ is a function on $\Omega$ where $\Omega$ is a given set
\item $\{\varphi_{n}\}$ is a frame in $\mathcal{H}$, see (\ref{eq:2.4}) and 
(\ref{eq:2.5}), and that 
\item $\{\varphi_{n}(t)\} \in l^{2}$, for all $t \in \Omega$
\end{enumerate}
then $\mathcal{H}$ is a reproducing kernel Hilbert space (RKHS) with kernel 
\begin{equation}
\label{eq:2.1.21}
  K^{G}(s,t)=\langle l(s), G^{-1}l(t) \rangle_{2} 
            =l(s)^{*}G^{-1}l(t), 
\end{equation}
where $l(t)=\{\varphi_{n}(t)\} \in l^{2}$, and where $G$ is the Gramian of 
$G=(\langle \varphi_{n}, \varphi_{m} \rangle_{\mathcal{H}})$.
Moreover, $G$ defines selfadjoint operator in $l^{2}$ with
dense domain, and we get an isometric isomorphism 
\begin{equation}
\label{eq:2.1.3}
  \mathcal{H}_{RK} \overset{T_{G}}{\rightarrow} \mathcal{H}
\end{equation}
$T_{G}(\sum_{n}c_{n}\varphi_{n})=T^{*}c$ where $T$ is the frame operator.
\end{theorem}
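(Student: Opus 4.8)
The plan is to route everything through the standard frame operators and then extract the kernel from the Gramian. First I would introduce the analysis operator $T\colon\mathcal H\to\ell^2$, $(Tf)_n=\langle\varphi_n,f\rangle_{\mathcal H}$, its adjoint the synthesis operator $T^*c=\sum_n c_n\varphi_n$, the frame operator $S=T^*T$, and the Gramian $G=TT^*$, so that (\ref{eq:2.1.2}) is precisely the action of $G$. The frame inequalities in (b) give $A\,I\le S\le B\,I$, so $S$ is bounded and boundedly invertible on $\mathcal H$, while $T$ is bounded below and hence injective with closed range. Consequently $T$ and $T^*$ are bounded, so $G=TT^*$ is in fact a bounded selfadjoint operator defined on all of $\ell^2$ (its dense domain being everything), with $\ker G=(\operatorname{ran}T)^{\perp}$ and closed range $\operatorname{ran}G=\operatorname{ran}T$; I would therefore read $G^{-1}$ in (\ref{eq:2.1.21}) as the inverse of $G$ on its range $\operatorname{ran}T$ (the Moore--Penrose inverse), which is the only sensible meaning once the frame is redundant and $G$ has a nontrivial kernel.

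Next I would establish the reproducing property, i.e.\ continuity of point evaluation. Using the canonical reconstruction $f=\sum_n\langle\varphi_n,S^{-1}f\rangle_{\mathcal H}\,\varphi_n$ and evaluating termwise, $f(t)=\sum_n (TS^{-1}f)_n\,\varphi_n(t)=\langle\overline{\ell(t)},\,TS^{-1}f\rangle_{2}$, where $\ell(t)=\{\varphi_n(t)\}$. By (c) we have $\ell(t)\in\ell^2$, so Cauchy--Schwarz gives $|f(t)|\le\|\ell(t)\|_{2}\,\|T\|\,\|S^{-1}\|\,\|f\|_{\mathcal H}$; hence $f\mapsto f(t)$ is a bounded functional and Riesz produces a unique $K_t\in\mathcal H$ with $f(t)=\langle K_t,f\rangle_{\mathcal H}$, so $\mathcal H$ is a RKHS. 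The requirement $\langle K_t,\varphi_n\rangle_{\mathcal H}=\varphi_n(t)$ for all $n$ forces $TK_t=\overline{\ell(t)}$ (the bar reflecting the inner-product convention), and solving this with the synthesis operator yields the explicit representative $K_t=T^*G^{-1}\overline{\ell(t)}$; the identity $TT^*G^{-1}=GG^{-1}=P_{\operatorname{ran}T}$ shows that the prescribed values $\varphi_n(t)$ are recovered precisely because $\ell(t)\in\operatorname{ran}T$.

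The kernel formula (\ref{eq:2.1.21}) then follows from a short computation: $K^{G}(s,t)=\langle K_s,K_t\rangle_{\mathcal H}=\langle T^*G^{-1}\overline{\ell(s)},\,T^*G^{-1}\overline{\ell(t)}\rangle_{\mathcal H}=\langle G^{-1}\overline{\ell(s)},\,GG^{-1}\overline{\ell(t)}\rangle_{2}=\langle G^{-1}\overline{\ell(s)},\,\overline{\ell(t)}\rangle_{2}$, where I use $\langle T^*a,T^*b\rangle_{\mathcal H}=\langle a,Gb\rangle_{2}$ together with $GG^{-1}=P_{\operatorname{ran}T}$ acting as the identity on $\ell(t)$; the Hermitian symmetry $G=G^{*}$ then recasts this as $\langle\ell(s),\,G^{-1}\ell(t)\rangle_{2}=\ell(s)^{*}G^{-1}\ell(t)$, which is (\ref{eq:2.1.21}). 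For the isometric isomorphism (\ref{eq:2.1.3}) I would match the two families of kernel elements: the reproducing vectors satisfy $\langle K_s,K_t\rangle_{\mathcal H}=K^{G}(s,t)=\langle K^{G}(\cdot,s),K^{G}(\cdot,t)\rangle_{\mathcal H_{RK}}$, so the assignment $K^{G}(\cdot,t)\mapsto K_t$ preserves inner products on the total set of kernel functions and extends to an isometry $T_G\colon\mathcal H_{RK}\to\mathcal H$. It is onto because $\{K^{G}(\cdot,t)\}$ is total in $\mathcal H_{RK}$ by construction and $\{K_t\}$ is total in $\mathcal H$ (totality of the frame makes evaluation-faithful). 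On the dense span this reads $T_G\big(\sum_n c_n\varphi_n\big)=\sum_n c_n\varphi_n=T^*c$, the stated formula.

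The main obstacle is the non-invertibility of $G$: for a genuinely redundant frame $G$ kills $(\operatorname{ran}T)^{\perp}$, so $G^{-1}$ must be treated as the pseudo-inverse on $\operatorname{ran}T$, and the crux of the argument is the compatibility $\ell(t)\in\operatorname{ran}T$ for every $t$ --- equivalently that every relation $\sum_n c_n\varphi_n=0$ in $\mathcal H$ is matched by $\sum_n c_n\varphi_n(t)=0$ among the values. This is exactly what guarantees that $K^{G}$ reproduces the prescribed functions $\varphi_n$ rather than their projections onto $\operatorname{ran}T$, and it is where hypotheses (a) and (c) must interlock with the frame property (b).
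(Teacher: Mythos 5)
Your proposal is correct in substance and arrives at the same kernel, but its mechanism is genuinely different from the paper's. The paper never applies the Riesz representation theorem to the evaluation functionals directly; instead it passes to the canonical Parseval frame $\psi_{n}=(T^{*}T)^{-1/2}\varphi_{n}$ (see (\ref{eq:2.8})--(\ref{eq:2.9})), proves in Lemma \ref{L:2.2} the identity $(T^{*}T)^{-1/2}l(t)=G^{-1/2}l(t)$ by Weierstrass approximation of $\sqrt{x}$ applied to the relation $(T^{*}T)l(t)=G\,l(t)$, and then reads off the reproducing identity (\ref{eq:2.12.1}) and the kernel as $K^{G}(s,t)=\sum_{n}\overline{\psi_{n}(s)}\psi_{n}(t)=\langle l(s),G^{-1}l(t)\rangle_{2}$ (Corollary \ref{C:2.1.1}); the isometry $T_{G}$ comes from Lemma \ref{L:2.3}, $\Vert\sum_{n}c_{n}\varphi_{n}\Vert_{\mathcal{H}}^{2}=\langle c,Gc\rangle_{2}=\Vert T^{*}c\Vert_{\mathcal{H}}^{2}$. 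You instead use the dual-frame reconstruction $f=\sum_{n}\langle\varphi_{n},S^{-1}f\rangle\varphi_{n}$ with $S=T^{*}T$, get boundedness of evaluation from Cauchy--Schwarz against $l(t)$, solve $TK_{t}=\overline{l(t)}$ to obtain $K_{t}=T^{*}G^{-1}\overline{l(t)}$, and build the isometry by matching kernel vectors. Your route avoids operator square roots and exhibits the kernel vectors explicitly; the paper's route additionally produces the Parseval frame $\{\psi_{n}\}$ realizing the kernel. Note that both arguments share the same silent step, termwise pointwise evaluation of a norm-convergent expansion (your ``evaluating termwise'' is exactly the paper's passage from (\ref{eq:2.8}) to (\ref{eq:2.12.1})), so on that point your proposal is at the same level of rigor as the paper.

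The one place you diverge materially from the theorem's hypotheses is the treatment of $G^{-1}$. You read ``frame'' in the standard (possibly redundant) sense, so you interpret $G^{-1}$ as the Moore--Penrose inverse and correctly identify the compatibility condition $l(t)\in\operatorname{ran}T$ as the crux --- but you never prove it, and it does not follow from (a)--(c) by any argument you give: for a redundant frame, a relation $\sum_{n}c_{n}\varphi_{n}=0$ in $\mathcal{H}$ cannot be shown to force $\sum_{n}c_{n}\varphi_{n}(t)=0$ without already knowing evaluations are continuous, so as written this is a gap in your more general setting. The paper sidesteps the issue by hypothesis: condition (\ref{eq:2.4}) is stated for \emph{all} $c\in l^{2}$, not merely for $c$ in the range of the analysis operator, i.e., $T^{*}$ is bounded below (a Riesz-basis condition); then $\langle c,Gc\rangle_{2}=\Vert T^{*}c\Vert_{\mathcal{H}}^{2}\geq B_{1}\Vert c\Vert_{2}^{2}$ gives $G\geq B_{1}I$, so $G$ is boundedly invertible on $l^{2}$, $\operatorname{ran}T=l^{2}$, and your pseudo-inverse discussion collapses to the genuine inverse, closing the gap. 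So under the theorem's actual hypotheses your proof is complete; under your weaker reading of (b) it is not.
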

\begin{proof}
\textbf{Overview: }
Since $\{\varphi_{n}\} \subset \mathcal{H}$ is a frame, the Gramian 
$G_{mn}:=\langle \varphi_{m}, \varphi_{n}\rangle_{\mathcal{H}}$, is an 
$\infty \times \infty$ matrix defining a bounded operator $l^{2} \rightarrow l^{2}$, 
invertible with $(G^{-1})_{mn}$ such that 
\[
  \sum_{k=1}^{\infty}(G^{-1})_{mk}\langle \varphi_{k}, \varphi_{n} \rangle_{\mathcal{H}}
  =\delta_{m, n}
\]
and the reproducing kernel of 
$\mathcal{H}$ is $\sum_{m}\sum_{n}\overline{\varphi_{m}(s)}G_{mn}^{-1}\varphi_{n}(t)
=\langle l(s), G^{-1}l(t) \rangle_{2}$
\end{proof}

\begin{proof}
(details) By (\ref{eq:2.1.3}) and Lemma \ref{L:2.3}, 
the frame operators $T$ and $T^{*}$ are as follows: Given $\mathcal{H}$, 
$\{\varphi_{n}\}$, set 
\begin{equation}
\label{eq:2.1}
  \begin{cases}
    T:\mathcal{H} \rightarrow l^{2} \\  
    T^{*}:l^{2} \rightarrow \mathcal{H}
  \end{cases}
\end{equation}
to be the two linear operators
\[
  Tf=(\langle \varphi_{n}, f\rangle_{\mathcal{H}}) 
\]
and adjoint $T^{*}$ as follows:
\[
  T^{*}c=\sum_{n}c_{n}\varphi_{n}.
\]

\begin{lemma}
\label{L:2.1}
We have 
\begin{equation}
\label{eq:2.2} 
  \langle Tf, c \rangle_{l^{2}}=\langle f, T^{*}c \rangle_{\mathcal{H}}, \quad \text{and} \quad
  T^{*}Tf=\sum \langle \varphi_{n}, f \rangle \varphi_{n}, \quad \forall f \in 
  \mathcal{H}, \quad \forall c \in l^{2},
\end{equation}
\begin{equation}
\label{eq:2.3}  
  (TT^{*}c)_{n}=(Gc)_{n}=\sum_{m}G_{nm}c_{m}, \quad \forall c \in l^{2}.
\end{equation}
\end{lemma}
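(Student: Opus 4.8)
The plan is to verify the three identities by direct computation from the definitions of $T$ and $T^*$ in (\ref{eq:2.1}), the only genuine content being to control convergence of the infinite series so that the formal manipulations are legitimate. Throughout I use the convention that $\langle\cdot,\cdot\rangle_{\mathcal{H}}$ and $\langle\cdot,\cdot\rangle_{l^2}$ are conjugate-linear in the first slot and linear in the second, which is the convention forced by the definitions $Tf=(\langle\varphi_n,f\rangle_{\mathcal{H}})$ and $T^*c=\sum_n c_n\varphi_n$ being linear in $f$ and $c$ respectively.

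First I would record that $T$ genuinely maps $\mathcal{H}$ into $l^2$ and is bounded: for $f\in\mathcal{H}$ we have $\|Tf\|_{l^2}^2=\sum_n|\langle\varphi_n,f\rangle_{\mathcal{H}}|^2$, and by the upper frame bound in (\ref{eq:2.4})--(\ref{eq:2.5}) this is dominated by $B\|f\|_{\mathcal{H}}^2$. Dually, for $c\in l^2$ the partial sums of $\sum_n c_n\varphi_n$ are Cauchy in $\mathcal{H}$ (the Bessel estimate applied on the synthesis side), so $T^*c$ is a well-defined element of $\mathcal{H}$ and $T^*$ is bounded. This boundedness is what makes the subsequent limiting arguments valid.

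Next I would establish the adjoint identity in (\ref{eq:2.2}). For a finitely supported $c\in\mathcal{F}$ the computation is immediate, $\langle Tf,c\rangle_{l^2}=\sum_n\overline{\langle\varphi_n,f\rangle_{\mathcal{H}}}\,c_n=\sum_n c_n\langle f,\varphi_n\rangle_{\mathcal{H}}=\langle f,\sum_n c_n\varphi_n\rangle_{\mathcal{H}}=\langle f,T^*c\rangle_{\mathcal{H}}$, using conjugate-linearity in the first argument to pass $\overline{\langle\varphi_n,f\rangle}=\langle f,\varphi_n\rangle$. Since $\mathcal{F}$ is dense in $l^2$ and both sides are continuous in $c$ by the first step, the identity extends to all $c\in l^2$; this simultaneously confirms that the operator written $T^*$ really is the Hilbert-space adjoint of $T$.

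Finally, the two composition formulas follow by substitution. Applying $T^*$ to $Tf=(\langle\varphi_n,f\rangle_{\mathcal{H}})_n$ yields $T^*Tf=\sum_n\langle\varphi_n,f\rangle_{\mathcal{H}}\varphi_n$, the frame-operator formula in (\ref{eq:2.2}); evaluating the $n$-th coordinate of $TT^*c$ gives $(TT^*c)_n=\langle\varphi_n,T^*c\rangle_{\mathcal{H}}=\langle\varphi_n,\sum_m c_m\varphi_m\rangle_{\mathcal{H}}=\sum_m\langle\varphi_n,\varphi_m\rangle_{\mathcal{H}}c_m=(Gc)_n$, which is (\ref{eq:2.3}). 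The one point requiring care, and the main obstacle, is the interchange of the inner product with the infinite sum in these last two steps; this is justified by the norm-convergence of $T^*c=\sum_m c_m\varphi_m$ together with continuity of $\langle\cdot,\cdot\rangle_{\mathcal{H}}$, i.e. exactly the boundedness secured in the first step.
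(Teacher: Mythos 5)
Your proposal is correct, and it in fact supplies more than the paper does: the paper states Lemma \ref{L:2.1} with no real proof at all, offering only the remark ``Do the real case first, then it is easy to extend to complex valued functions.'' Your argument --- boundedness of $T$ from the upper frame bound in (\ref{eq:2.5}), boundedness of the synthesis operator $T^{*}$ via the Bessel estimate, verification of the adjoint identity on the dense subspace $\mathcal{F}$ of finitely supported sequences followed by extension by continuity, and then the two composition formulas by substitution with the sum/inner-product interchange justified by norm convergence --- is exactly the verification the paper leaves implicit, and your attention to the inner-product convention (conjugate-linear in the first slot, so that $T$ is linear) matches the convention the paper uses tacitly. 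The closest the paper comes to a proof is the adjacent Lemma \ref{L:2.1.1}, which establishes only the entrywise form of (\ref{eq:2.3}) by pairing with canonical basis vectors, $(TT^{*})_{i,j}=\langle \delta_{i}, TT^{*}\delta_{j}\rangle_{l^{2}}=\langle T^{*}\delta_{i}, T^{*}\delta_{j}\rangle_{\mathcal{H}}=G_{i,j}$, and which takes the adjoint relation (\ref{eq:2.2}) for granted; your coordinate computation yields the stronger statement $(TT^{*}c)_{n}=(Gc)_{n}$ for every $c\in l^{2}$, with convergence addressed, and derives (\ref{eq:2.2}) rather than assuming it. The one step you state somewhat tersely --- that the partial sums of $\sum_{n}c_{n}\varphi_{n}$ are Cauchy ``by the Bessel estimate applied on the synthesis side'' --- deserves a line of expansion (for a finite index set $F$, $\Vert\sum_{n\in F}c_{n}\varphi_{n}\Vert_{\mathcal{H}}=\sup_{\Vert g\Vert=1}\vert\sum_{n\in F}c_{n}\langle g,\varphi_{n}\rangle_{\mathcal{H}}\vert\leq \sqrt{B_{2}}\,(\sum_{n\in F}\vert c_{n}\vert^{2})^{1/2}$ by Cauchy--Schwarz and (\ref{eq:2.5})), but this is a standard argument and not a gap in the logic.
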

Do the real case first, then it is easy to extend to complex valued functions. 

Note that $TT^{*}$ is an operator in $l^{2}$, i.e.,
\[
  l^{2} \overset{TT^{*}}{\rightarrow} l^{2}.
\]
It has a matrix-representation as follows
\begin{equation}
\label{eq:2.3.1}
  (TT^{*})_{i,j}=\langle \delta_{i}, TT^{*}\delta_{j}\rangle_{l^{2}}
\end{equation}

\begin{lemma}
\label{L:2.1.1}
We have 
\begin{equation}
\label{eq:2.3.2}
  (TT^{*})_{i,j}=G_{i,j}=\langle \varphi_{i}, \varphi_{j}\rangle_{\mathcal{H}}, 
  \quad \forall (i,j)\in \mathbb{N} \times \mathbb{N}.
\end{equation}
\end{lemma}
\begin{proof}
By (\ref{eq:2.3.1}), we have
\begin{align*}
  (TT^{*})_{i,j}&=\langle \delta_{i}, TT^{*}\delta_{j} \rangle_{l^{2}} \\
  &=\langle T^{*}\delta_{i}, T^{*}\delta_{j} \rangle_{\mathcal{H}} \\
  &=\langle \varphi_{i}, \varphi_{j}\rangle_{\mathcal{H}} = G_{i,j}
\end{align*}
which is the desired conclusion (\ref{eq:2.3.2}).
\end{proof}

Both $T^{*}T$ and $TT^{*}$ are self-adjoint: If $B_{i}$, $i=1,2$ 
are the constants from the frame estimates, then:
\begin{equation}
\label{eq:2.4}
  B_{1}\|c\|_{2}^{2} \leq \|T^{*}c\|_{\mathcal{H}}^{2} \leq B_{2}\|c\|_{2}^{2} \quad 
  \forall c \in l^{2}, \quad \text{and}
\end{equation}
\begin{equation}
\label{eq:2.5}
  B_{1}\|f\|_{\mathcal{H}}^{2} \leq \|Tf\|_{l^{2}}^{2} \leq 
  B_{2}\|f\|_{\mathcal{H}}^{2} \quad 
  \forall f \in \mathcal{H}; 
\end{equation}
equivalently
\[
  B_{1}\|f\|_{\mathcal{H}}^{2} \leq 
  \sum_{n}| \langle \varphi_{n}, f \rangle_{\mathcal{H}}|^{2} 
  \leq B_{2}\|f\|_{\mathcal{H}}^{2}.
\]

Set
\begin{equation}
\label{eq:2.6}
  K(s,t)=\sum_{n=1}^{\infty}\varphi_{n}(s)^{*}\varphi_{n}(t)=l(s)^{*}l(t)
  =\langle l(s), l(t) \rangle_{2}
\end{equation}
We have
\[
  B_{1}I_{l^{2}} \leq TT^{*} \leq B_{2}I_{l^{2}}, \quad \text{and}
\]
\[
  B_{1}I_{\mathcal{H}} \leq T^{*}T \leq B_{2}I_{\mathcal{H}}.
\]

If $B_{1}=B_{2}=1$, then we say that $\{\varphi_{n}\}_{n\in \mathbb{N}}$ is a Parseval 
frame.

For the theory of frames and some of their applications, see e.g., 
\cite{CaKu04}, \cite{Chr03}, \cite{CaCh98} and the papers cited there.

By the polar-decomposition theorems, see e.g., \cite{La02} we conclude that there is a 
unitary isomorphism $u: \mathcal{H} \rightarrow l^{2}$ such that 
$T=u(T^{*}T)^{1/2}=(TT^{*})^{1/2}u$; and so in particular, the two s.a. operators 
$T^{*}T$ and $TT^{*}$ are unitarily equivalent.

\begin{definition}
\label{D:2.1}
\begin{equation}
\label{eq:2.7}
  l(t)=(\varphi_{n}(t))\in l^{2}.
\end{equation}  
\end{definition}

Therefore $(T^{*}T)^{-1/2}$ is well defined $\mathcal{H} \rightarrow \mathcal{H}$.
Now (\ref{eq:2.2}) holds if and only if 
\[
  f=\sum\langle(T^{*}T)^{-1/2}\varphi, f \rangle (T^{*}T)^{-1/2}\varphi_{n}
\]
or equivalently:
\begin{equation}
\label{eq:2.8}
  f=\sum\langle \psi_{n}, f \rangle_{\mathcal{H}}\psi_{n},
\end{equation}  
where
\begin{equation}
\label{eq:2.9}
  \psi_{n}:=(T^{*}T)^{-1/2}\varphi_{n}.
\end{equation} 
Here we used that $T^{*}T$ is a selfadjoint operator in $\mathcal{H}$, and it has a 
positive spectral lower bound; where $\{\varphi_{j}\}_{j \in \mathbb{N}}$ is assumed to 
be a frame.
\end{proof}

\begin{lemma}
\label{L:2.1.2}
There is an operator $L:\mathcal{H} \rightarrow \mathcal{H}$ (the Lax-Milgram operator) 
such that 
\begin{equation}
\label{eq:2.9.1}
  \sum_{n=1}^{\infty}\langle f, \varphi_{n} \rangle_{\mathcal{H}}
  \langle \varphi_{n} Lg \rangle_{\mathcal{H}}=\langle f,g \rangle_{\mathcal{H}}
\end{equation}
holds for all $f \in \mathcal{H}$.
\end{lemma}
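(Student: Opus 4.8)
The plan is to recognize the sum on the left of \eqref{eq:2.9.1} as a sesquilinear form built from the frame operator, and then to take $L$ to be the inverse of that operator. Concretely, I would set $S := T^{*}T : \mathcal{H} \to \mathcal{H}$, the frame operator, so that by Lemma \ref{L:2.1} we have $Sf = \sum_{n}\langle \varphi_{n}, f\rangle_{\mathcal{H}}\varphi_{n}$. The frame estimate \eqref{eq:2.5} says precisely that $B_{1}I_{\mathcal{H}} \le S \le B_{2}I_{\mathcal{H}}$ with $B_{1} > 0$, so $S$ is a bounded, selfadjoint, strictly positive operator on $\mathcal{H}$; in particular it is boundedly invertible. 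I would then \emph{define} the Lax--Milgram operator by $L := S^{-1} = (T^{*}T)^{-1}$.

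The verification of \eqref{eq:2.9.1} is then a short computation using the adjoint relation $\langle Tf, c\rangle_{l^{2}} = \langle f, T^{*}c\rangle_{\mathcal{H}}$ from \eqref{eq:2.2}. For any $f, g \in \mathcal{H}$, writing $h := Lg$ and using $(Tf)_{n} = \langle \varphi_{n}, f\rangle_{\mathcal{H}}$, the left-hand side becomes
\[
  \sum_{n=1}^{\infty}\langle f, \varphi_{n}\rangle_{\mathcal{H}}\langle \varphi_{n}, h\rangle_{\mathcal{H}}
  = \sum_{n=1}^{\infty}\overline{(Tf)_{n}}\,(Th)_{n}
  = \langle Tf, Th\rangle_{l^{2}}
  = \langle f, T^{*}Th\rangle_{\mathcal{H}}
  = \langle f, Sh\rangle_{\mathcal{H}}.
\]
Since $h = Lg = S^{-1}g$ gives $Sh = g$, the expression equals $\langle f, g\rangle_{\mathcal{H}}$, which is exactly \eqref{eq:2.9.1}. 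Absolute convergence of the series is furnished by the upper frame bound, since $\sum_{n}|\langle \varphi_{n}, f\rangle|^{2} \le B_{2}\|f\|_{\mathcal{H}}^{2}$ and likewise for $h$, so Cauchy--Schwarz in $l^{2}$ applies. As a sanity check, one may rewrite $\langle \varphi_{n}, Lg\rangle = \langle S^{-1}\varphi_{n}, g\rangle$, so that \eqref{eq:2.9.1} is just the polarized form of the frame reconstruction formula in terms of the canonical dual frame $S^{-1}\varphi_{n}$.

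The only substantive point — and the place where the frame hypothesis is indispensable — is the invertibility of $S = T^{*}T$. This is exactly the content of the Lax--Milgram lemma applied to the bounded, coercive sesquilinear form $(f,g) \mapsto \langle Tf, Tg\rangle_{l^{2}} = \sum_{n}\langle f, \varphi_{n}\rangle_{\mathcal{H}}\langle \varphi_{n}, g\rangle_{\mathcal{H}}$, whose coercivity constant is the lower frame bound $B_{1}$; without $B_{1} > 0$ the inverse need not exist and $L$ would be undefined. I expect no real obstacle beyond carefully invoking this positivity, together with keeping the convention (conjugate-linear in the first slot) straight so that the conjugations in passing to the $l^{2}$ inner product land correctly.
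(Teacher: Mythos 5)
Your proof is correct, but it takes a more constructive route than the paper's. The paper proves this lemma by invoking the Lax--Milgram lemma (citing Lax's book, p.~57) as a black box applied to the sesquilinear form $\mathcal{B}(f,g)=\sum_{n}\langle f,\varphi_{n}\rangle_{\mathcal{H}}\langle \varphi_{n},g\rangle_{\mathcal{H}}$: the frame estimate \eqref{eq:2.5} gives boundedness and coercivity, $B_{1}\|f\|_{\mathcal{H}}^{2}\leq\mathcal{B}(f,f)\leq B_{2}\|f\|_{\mathcal{H}}^{2}$, and existence of $L$ follows; the identification $L=(T^{*}T)^{-1}$ is deferred to Corollary \ref{C:2.1}, which is stated without proof. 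You instead build $L$ outright: the frame bounds give $B_{1}I_{\mathcal{H}}\leq T^{*}T\leq B_{2}I_{\mathcal{H}}$, so $S:=T^{*}T$ is selfadjoint, strictly positive, hence boundedly invertible; you set $L:=S^{-1}$ and verify \eqref{eq:2.9.1} directly via $\sum_{n}\langle f,\varphi_{n}\rangle_{\mathcal{H}}\langle\varphi_{n},Lg\rangle_{\mathcal{H}}=\langle Tf,TLg\rangle_{l^{2}}=\langle f,SLg\rangle_{\mathcal{H}}=\langle f,g\rangle_{\mathcal{H}}$, with convergence of the series secured by the upper frame bound and Cauchy--Schwarz. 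What your approach buys: it is self-contained (the only abstract input is that a selfadjoint operator with a positive lower bound is invertible), it proves the lemma and Corollary \ref{C:2.1} in one stroke, and it exploits the fact that $\mathcal{B}$ is Hermitian, so the full non-symmetric strength of Lax--Milgram is not needed. What the paper's route buys: brevity, and an argument that would carry over unchanged to a non-Hermitian coercive form, where no explicit inverse formula of the type $L=S^{-1}$ with $S$ selfadjoint is available.
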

\begin{proof}
We shall apply the Lax-Milgram lemma \cite{La02}, p. 57 to the sesquilinear form
\begin{equation}
\label{eq:2.9.2}
  \mathcal{B}(f,g)=\sum_{n=1}^{\infty}\langle f, \varphi_{n} \rangle_{\mathcal{H}}
  \langle \varphi_{n}, g \rangle_{\mathcal{H}}, \quad \forall f,g \mathcal{H}.
\end{equation}
Since $\{\varphi_{n}\}_{n=1}^{\infty}$ is given to be a frame in $\mathcal{H}$, then 
our frame-bounds $B_{1}>0$ and $B_{2}<\infty$ such that (\ref{eq:2.5}) holds.  
Introducing $\mathcal{B}$ from (\ref{eq:2.9.2}) this into 
\begin{equation}
\label{eq:2.9.3}
  B_{1}\|f\|_{\mathcal{H}}^{2} \leq \mathcal{B}(f,f) \leq B_{2}\|f\|_{\mathcal{H}}^{2}, 
  \quad \forall f \mathcal{H}.
\end{equation}
The existence of the operator $L$ as stated in (\ref{eq:2.9.1}) now follows from the 
Lax-Milgram lemma.
\end{proof}

\begin{corollary}
\label{C:2.1}
Let $\mathcal{H}$, $\{\varphi_{n}\}$, $T$, $T^{*}$ be as in Lemma \ref{L:2.2}; and let 
$L$ be the Lax-Milgram operator; then $L=(T^{*}T)^{-1}$.
\end{corollary}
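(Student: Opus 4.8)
The plan is to recognize the Lax--Milgram sesquilinear form $\mathcal{B}$ of (\ref{eq:2.9.2}) as nothing but the quadratic form of the frame operator $T^{*}T$, and then to read off the identity $L=(T^{*}T)^{-1}$ directly from the defining relation (\ref{eq:2.9.1}). The whole corollary is really just a translation of the abstract Lax--Milgram conclusion of Lemma \ref{L:2.1.2} back into the operator language of $T$ and $T^{*}$.

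First I would rewrite $\mathcal{B}(f,g)$ in operator form. Since $(Tf)_{n}=\langle \varphi_{n}, f\rangle_{\mathcal{H}}$ by the definition of $T$ in (\ref{eq:2.1}), each summand $\langle f, \varphi_{n}\rangle_{\mathcal{H}}\langle \varphi_{n}, g\rangle_{\mathcal{H}}$ equals $\overline{(Tf)_{n}}\,(Tg)_{n}$, so that
\[
  \mathcal{B}(f,g)=\langle Tf, Tg\rangle_{l^{2}}=\langle f, T^{*}T g\rangle_{\mathcal{H}},
\]
the last step being the adjoint relation (\ref{eq:2.2}). This is the crux: the form $\mathcal{B}$ is exactly the form associated with the bounded, selfadjoint, invertible frame operator $T^{*}T$, whose invertibility is guaranteed by the lower frame bound $B_{1}>0$ through the estimate $B_{1}I_{\mathcal{H}}\leq T^{*}T\leq B_{2}I_{\mathcal{H}}$ recorded after (\ref{eq:2.6}).

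Next I would substitute $g\mapsto Lg$ into this identity and compare with the defining property (\ref{eq:2.9.1}) of the Lax--Milgram operator. The left side of (\ref{eq:2.9.1}) is precisely $\mathcal{B}(f,Lg)=\langle f, T^{*}T L g\rangle_{\mathcal{H}}$, while the right side is $\langle f, g\rangle_{\mathcal{H}}$; hence
\[
  \langle f, T^{*}T L g\rangle_{\mathcal{H}}=\langle f, g\rangle_{\mathcal{H}}, \quad \forall f,g\in\mathcal{H}.
\]
Since $f$ ranges over all of $\mathcal{H}$, the vectors $T^{*}T L g$ and $g$ must coincide for every $g$, i.e.\ $T^{*}T L=I_{\mathcal{H}}$. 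Finally, because $T^{*}T$ has a bounded inverse, I would multiply on the left by $(T^{*}T)^{-1}$ to conclude $L=(T^{*}T)^{-1}$.

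I do not expect a serious obstacle here; the only point requiring care is the bookkeeping of the inner-product conjugation convention, so that $\mathcal{B}(f,g)$ matches $\langle f, T^{*}T g\rangle_{\mathcal{H}}$ rather than its complex conjugate, together with the justification that $T^{*}T$ is genuinely invertible (not merely injective) --- which is exactly what the frame hypothesis supplies. One could equally invoke the uniqueness of $L$ furnished by the Lax--Milgram lemma to bypass the final inversion step, but the direct computation above already pins $L$ down.
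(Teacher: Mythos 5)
Your proof is correct. The paper actually states this corollary \emph{without} any proof, so there is no argument of the authors to compare against; yours is exactly the natural completion: rewriting the Lax--Milgram form as $\mathcal{B}(f,g)=\langle Tf,Tg\rangle_{l^{2}}=\langle f,T^{*}Tg\rangle_{\mathcal{H}}$, reading off $T^{*}TL=I_{\mathcal{H}}$ from (\ref{eq:2.9.1}), and then using the invertibility of $T^{*}T$ (from the frame bound $B_{1}I_{\mathcal{H}}\leq T^{*}T$ recorded after (\ref{eq:2.6})) to upgrade the right-inverse identity to $L=(T^{*}T)^{-1}$. This matches the machinery the paper sets up in Lemma \ref{L:2.1.2} and fills in the one step the paper leaves implicit.
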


\begin{lemma}
\label{L:2.1.3}
The kernel $K^{G}(\cdot, \cdot)$ on $\Omega \times \Omega$ from (\ref{eq:2.1.21}) is 
well-defined and positive definite.
\end{lemma}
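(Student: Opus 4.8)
The plan is to show two things about the kernel
\[
  K^{G}(s,t)=\langle l(s), G^{-1}l(t)\rangle_{2}=l(s)^{*}G^{-1}l(t),
\]
namely that it is well-defined (the inner product makes sense for every $s,t\in\Omega$) and that it is positive definite. For well-definedness I would invoke hypothesis (c) of Theorem \ref{T:2.1}, which guarantees $l(t)=\{\varphi_{n}(t)\}\in l^{2}$ for every $t\in\Omega$; combined with the fact that $G$ is a frame Gramian, equation (\ref{eq:2.4}) gives $B_{1}I_{l^{2}}\le G\le B_{2}I_{l^{2}}$ with $B_{1}>0$, so $G$ is a bounded, boundedly invertible, selfadjoint operator on $l^{2}$ with $B_{2}^{-1}I_{l^{2}}\le G^{-1}\le B_{1}^{-1}I_{l^{2}}$. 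In particular $G^{-1}l(t)\in l^{2}$, so the inner product $\langle l(s),G^{-1}l(t)\rangle_{2}$ is a convergent (indeed absolutely summable) quantity for all $s,t$; this is the easy half.

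\emph{Positive definiteness.} The key step is to exhibit $G^{-1}$ as a positive operator and factor it. Since $G$ is selfadjoint with spectrum in $[B_{1},B_{2}]\subset(0,\infty)$, so is $G^{-1}$, and by the spectral theorem $G^{-1}$ admits a positive selfadjoint square root $G^{-1/2}$ on $l^{2}$. First I would rewrite the kernel as
\[
  K^{G}(s,t)=\langle G^{-1/2}l(s),\,G^{-1/2}l(t)\rangle_{2},
\]
using selfadjointness of $G^{-1/2}$ and $G^{-1}=G^{-1/2}G^{-1/2}$. Setting $\eta(t):=G^{-1/2}l(t)\in l^{2}$, this displays $K^{G}(s,t)=\langle \eta(s),\eta(t)\rangle_{2}$ as a genuine Gram kernel of an $l^{2}$-valued map $t\mapsto\eta(t)$.

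Then positive definiteness is immediate: for any finite set of points $t_{1},\dots,t_{N}\in\Omega$ and scalars $c_{1},\dots,c_{N}\in\mathbb{C}$,
\[
  \sum_{i,j=1}^{N}\overline{c_{i}}\,c_{j}\,K^{G}(t_{i},t_{j})
  =\sum_{i,j=1}^{N}\overline{c_{i}}\,c_{j}\,\langle \eta(t_{i}),\eta(t_{j})\rangle_{2}
  =\Bigl\|\sum_{j=1}^{N}c_{j}\,\eta(t_{j})\Bigr\|_{2}^{2}\ge 0,
\]
which is exactly the defining property of a positive definite kernel. The main obstacle, such as it is, lies not in the algebra but in the functional-analytic bookkeeping: one must be careful that $G$, which is a priori only densely defined via (\ref{eq:2.1.2}) on the finitely supported sequences $\mathcal{F}$, has actually been upgraded to a bounded, boundedly invertible selfadjoint operator before one is entitled to write $G^{-1/2}$. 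This is precisely what the frame bounds in (\ref{eq:2.4}) supply, together with the identification $TT^{*}=G$ from Lemma \ref{L:2.1.1}; once $B_{1}I\le G\le B_{2}I$ is in hand, the continuous functional calculus delivers $G^{-1/2}$ and the rest follows mechanically. I would therefore open the proof by citing (\ref{eq:2.4}) and Lemma \ref{L:2.1.1} to pin down these operator-theoretic facts, and only then carry out the square-root factorization and the one-line Gram computation above.
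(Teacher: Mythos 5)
Your proof is correct and follows essentially the same route as the paper's: both reduce the finite double sum $\sum_{i,j}\overline{c_{i}}c_{j}K^{G}(t_{i},t_{j})$ to the positive quadratic form $\langle F, G^{-1}F\rangle_{2}\geq 0$ with $F=\sum_{i}c_{i}l(t_{i})$, your version merely making the positivity manifest by writing it as $\left\Vert G^{-1/2}F\right\Vert_{2}^{2}$ --- a factorization the paper itself employs in Lemma \ref{L:2.2} and Corollary \ref{C:2.1.1}. Your extra care about well-definedness and about upgrading $G$ to a bounded, boundedly invertible selfadjoint operator (via the frame bounds and Lemma \ref{L:2.1.1}) fills in bookkeeping that the paper's proof leaves implicit.
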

\begin{proof}
We must show that all the finite double summations
\[
  \sum_{i}\sum_{j}\overline{c_{i}}c_{j}K^{G}(t_{i},t_{j})
\]
are $\geq 0$, whenever $(c_{i})$ is a finite system of coefficients, and $(t_{i})$ is 
a finite sample of points in $\Omega$.  Now fix $(c_{i})$ and $(t_{i})$ as specified, 
and, for $n \in \mathbb{N}$, set
\[
  F_{n}:=\sum_{i}c_{i}\varphi_{n}(t_{i}); 
\]
then we have the following:
\begin{align*}
  \sum_{i}\sum_{j}\overline{c_{i}}c_{j}K^{G}(t_{i},t_{j}) 
  &= \sum_{i}\sum_{j}\overline{c_{i}}c_{j}\langle l(t_{i}), G^{-1}l(t_{j})\rangle_{l^{2}} \\
  &= \sum_{i}\sum_{j}\overline{c_{i}}c_{j}\sum_{m}\sum_{n}\overline{\varphi_{m}(t_{i})}
  G^{-1}_{m,n}\varphi_{n}(t_{j}) \\
  &=\sum_{m}\sum_{n}\overline{F_{m}}G^{-1}_{m,n}F_{n} \geq 0. 
\end{align*}
\end{proof}

\begin{lemma}
\label{L:2.2}
We have the following:
\begin{equation}
\label{eq:2.10}
  \psi_{n}(t)=(G^{-1/2}\varphi)_{n}(t)=\sum_{m=1}^{\infty}(G_{nm}^{-1/2}\varphi_{m})(t)
  =G^{-1/2}l(t)_{n}, 
\end{equation} 
and these functions are in the RKHS of the kernel $K^{G}$ from (\ref{eq:2.1.21}).
\end{lemma}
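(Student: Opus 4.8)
The plan is to establish the chain of identities in (\ref{eq:2.10}) first, and then read off membership in the RKHS from Theorem \ref{T:2.1}. Recall from (\ref{eq:2.9}) that $\psi_{n}=(T^{*}T)^{-1/2}\varphi_{n}$, and observe that $\varphi_{n}=T^{*}\delta_{n}$ since $T^{*}\delta_{n}=\sum_{k}(\delta_{n})_{k}\varphi_{k}=\varphi_{n}$. The key tool is that $T^{*}$ intertwines the two frame operators. From the polar decomposition recorded above, $T=u(T^{*}T)^{1/2}=(TT^{*})^{1/2}u=G^{1/2}u$ with $u:\mathcal{H}\to l^{2}$ unitary, while the frame bounds (\ref{eq:2.4})--(\ref{eq:2.5}) give $B_{1}I\leq G\leq B_{2}I$, so that $G^{1/2}$ and $(T^{*}T)^{1/2}$ are boundedly invertible. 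Taking adjoints in the two factorizations of $T$ yields $u^{*}=(T^{*}T)^{-1/2}T^{*}$ and $u^{*}=T^{*}G^{-1/2}$, hence
\[
  (T^{*}T)^{-1/2}T^{*}=T^{*}G^{-1/2}.
\]

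First I would use this to rewrite $\psi_{n}$ entirely in terms of $G$ and the $\varphi_{m}$:
\[
  \psi_{n}=(T^{*}T)^{-1/2}\varphi_{n}=(T^{*}T)^{-1/2}T^{*}\delta_{n}=T^{*}G^{-1/2}\delta_{n}=\sum_{m=1}^{\infty}(G^{-1/2})_{mn}\varphi_{m},
\]
the last step because $G^{-1/2}\delta_{n}$ is the $n$-th column $((G^{-1/2})_{mn})_{m}$ of $G^{-1/2}$. Since $G$ is selfadjoint and positive, so is $G^{-1/2}$; treating the real case first as suggested after Lemma \ref{L:2.1}, the entries satisfy $(G^{-1/2})_{mn}=(G^{-1/2})_{nm}$, which identifies the sum with $\sum_{m}G^{-1/2}_{nm}\varphi_{m}$ and gives the operator-level equalities of (\ref{eq:2.10}).

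Next I would pass to pointwise values on $\Omega$. Because Theorem \ref{T:2.1} guarantees that $\mathcal{H}$ is an RKHS, each evaluation $f\mapsto f(t)$ is $\mathcal{H}$-continuous, so the $\mathcal{H}$-convergent expansion above converges pointwise and may be evaluated term by term, giving
\[
  \psi_{n}(t)=\sum_{m=1}^{\infty}(G^{-1/2})_{nm}\varphi_{m}(t)=(G^{-1/2}l(t))_{n},
\]
where $l(t)=(\varphi_{m}(t))_{m}$ from (\ref{eq:2.7}); this is (\ref{eq:2.10}). For the membership claim, note that $T^{*}T\geq B_{1}I>0$ makes $(T^{*}T)^{-1/2}$ a bounded operator on $\mathcal{H}$, so $\psi_{n}=(T^{*}T)^{-1/2}\varphi_{n}\in\mathcal{H}$; since Theorem \ref{T:2.1} identifies $\mathcal{H}$ with the RKHS of $K^{G}$, each $\psi_{n}$ lies in that RKHS.

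The main obstacle I expect is the justification of the two interchanges rather than any hard estimate: the intertwining $(T^{*}T)^{-1/2}T^{*}=T^{*}G^{-1/2}$ must be derived cleanly from the polar decomposition together with the invertibility supplied by the frame bounds, and the term-by-term evaluation of the infinite sum must be anchored to the continuity of point evaluations furnished by the RKHS structure of Theorem \ref{T:2.1}. One must also keep careful track of complex conjugation in the matrix entries $(G^{-1/2})_{mn}$ versus $(G^{-1/2})_{nm}$, which is precisely why reducing to the real symmetric case at the outset is convenient.
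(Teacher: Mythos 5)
Your operator-level derivation of the expansion $\psi_{n}=\sum_{m}(G^{-1/2})_{nm}\varphi_{m}$ is correct and genuinely different from the paper's. You obtain the intertwining $(T^{*}T)^{-1/2}T^{*}=T^{*}G^{-1/2}$ by taking adjoints in the two polar factorizations $T=u(T^{*}T)^{1/2}=G^{1/2}u$ and cancelling the invertible positive factors, then apply it to $\delta_{n}$ via $\varphi_{n}=T^{*}\delta_{n}$. The paper instead starts from the frame identity $(T^{*}T)\varphi_{n}=\sum_{m}\langle\varphi_{m},\varphi_{n}\rangle\varphi_{m}$, written as $(T^{*}T)l(t)=G\,l(t)$ in (\ref{eq:2.11}), and passes to $(T^{*}T)^{-1/2}l(t)=G^{-1/2}l(t)$ in (\ref{eq:2.12}) by Weierstrass approximation of $\sqrt{x}$ on the spectral interval; your route gets the same identity at the operator level with no approximation step, which is tidy.

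The second half of your proof, however, is circular within the architecture of this paper. Lemma \ref{L:2.2} is not a corollary of Theorem \ref{T:2.1}; it is a step in its proof. The reproducing property (\ref{eq:2.12.1}), $f(t)=\langle K_{t}^{G},f\rangle$ --- which is precisely the content of Theorem \ref{T:2.1}'s claim that $\mathcal{H}$ is an RKHS with kernel $K^{G}$ --- is derived \emph{inside} the proof of Lemma \ref{L:2.2}, and the paper flags this immediately afterwards (``Note that (\ref{eq:2.12.1}) is the reproducing property''). So you cannot invoke Theorem \ref{T:2.1} to obtain continuity of point evaluations (your justification for evaluating $\sum_{m}(G^{-1/2})_{nm}\varphi_{m}$ term by term at $t$), nor to identify $\mathcal{H}$ with the RKHS of $K^{G}$ (your justification for the membership claim); both appeals assume what this lemma is needed to establish. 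A non-circular treatment of the pointwise step goes as in the paper: hypothesis (c) of Theorem \ref{T:2.1} gives $l(t)=(\varphi_{m}(t))\in l^{2}$, the rows of the bounded operator $G^{-1/2}$ lie in $l^{2}$, so the sum $\sum_{m}(G^{-1/2})_{nm}\varphi_{m}(t)=(G^{-1/2}l(t))_{n}$ is absolutely convergent, and its identification with $\psi_{n}(t)$ comes from transferring the frame identity $(T^{*}T)l(t)=Gl(t)$ through the functional calculus --- not from an RKHS structure on $\mathcal{H}$ that is not yet available. Membership of the $\psi_{n}$ in the RKHS of $K^{G}$ then follows from (\ref{eq:2.10}) together with the expansion $K^{G}(s,t)=\sum_{n}\overline{\psi_{n}(s)}\psi_{n}(t)$ and the reproducing identity (\ref{eq:2.12.1}) derived at this point, rather than by quoting the theorem.
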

\begin{proof}
Begin with (the frame identity):
\begin{equation}
\label{eq:2.11}
  (T^{*}T)\varphi_{n}\underset{by (\ref{eq:2.2})}{=} 
  \sum_{m=1}^{\infty} \langle \varphi_{m}, \varphi_{n} \rangle \varphi_{m}=(Gl)_{n}, 
  \quad \forall n \in \mathbb{Z}_{+}, \text{ where } l=
  \begin{bmatrix}
      \varphi_{1}  \\
      \varphi_{2}  \\
      \vdots
  \end{bmatrix}
\end{equation} 
if and only if 
\[
  (T^{*}T)l(t)=G(l(t)).
\]
Now approximate $\sqrt{x}$ with polynomials (Weierstrass), and we get
\begin{equation}
\label{eq:2.12}
  (T^{*}T)^{-1/2}l(t)=G^{-1/2}l(t).
\end{equation} 
Recall, $\psi_{n}=(T^{*}T)^{-1/2}\varphi_{n}$. $\psi_{n}(t)=(G^{-1/2}l(t))_{n}$.
Now rewrite (\ref{eq:2.8}) as
\begin{equation}
\label{eq:2.12.1}
  f(t)=\sum_{n=1}^{\infty}\langle \psi_{n},f \rangle_{\mathcal{H}}\psi_{n}(t) 
      =\sum_{n=1}^{\infty}\langle G^{-1/2}\varphi_{n}, f\rangle_{\mathcal{H}}
        (G^{-1/2}\varphi)_{n}(t) 
      =\langle K_{t}^{G},f\rangle
\end{equation}
where
\begin{align*}
  K_{t}^{G}&=\sum_{n=1}^{\infty}G^{-1/2}\varphi_{n}(\cdot)(G^{-1/2}\varphi_{n})(t) \\
  &\underset{by (\ref{eq:2.12})}{=} K^{G}(s,t)
  =\sum_{n=1}^{\infty}(G^{-1/2}\varphi_{n})(s)(G^{-1/2}\varphi_{n})(t) \\
  &=\langle G^{-1/2}l(s), G^{-1/2}l(t)\rangle_{2} \\
  &=\langle l(s), G^{-1}l(t)\rangle_{2}
\end{align*}
For the complex case, the result still holds, mutatis mutandis; one only needs to add 
the complex conjugations.
\end{proof}

Note that (\ref{eq:2.12.1}) is the reproducing property.  

\begin{corollary}
\label{C:2.1.1}
The function $(\psi_{n}(t))$ from (\ref{eq:2.10}) in Lemma \ref{L:2.2} satisfy
\begin{equation}
\label{eq:2.12.2}
  \sum_{n\in\mathbb{N}}\overline{\psi_{n}(s)}\psi_{n}(t)=K^{G}(s,t), \quad 
  \forall (s,t) \in \Omega \times \Omega.
\end{equation}
\end{corollary}
\begin{proof}
\begin{align*}
  LHS_{(\ref{eq:2.12.2})}&=\langle G^{-1/2}l(s), G^{-1/2}l(t)\rangle_{2} \\
  &= \langle l(s), (G^{-1/2})^{2}l(t)\rangle_{2} \\
  &=\langle l(s), G^{-1}(l(t)),\rangle_{2} \\
  &= K^{G}(s,t).
\end{align*}
\end{proof}

\begin{lemma}
\label{L:2.3}
The following isometric property holds:
\begin{align*}
  \left\Vert \sum_{n=1}c_{n}\varphi_{n}(\cdot) \right\Vert_{\mathcal{H}}^{2}
  &= \sum\sum c_{m}c_{n}\langle \varphi_{n}, \varphi_{m} \rangle_{\mathcal{H}} \\
  &= c^{T}Gc=\langle c,Gc \rangle_{2} \\
  &=\langle c, TT^{*}c\rangle_{2}=\left\Vert T^{*}c\right\Vert_{\mathcal{H}}^{2}, 
  \quad T^{*}c \in \mathcal{H}, \quad c \in l^{2} 
\end{align*}
where $T$ and $T^{*}$ are the frame operators 
$\mathcal{H} \overset{T}{\underset{T^{*}}{\longleftrightarrow}} 
l^{2}$, i.e., $Tf=(\langle \varphi_{n}, f\rangle_{\mathcal{H}})_{n}\in l^{2}$ 
\end{lemma}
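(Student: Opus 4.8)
The plan is to read the displayed chain of equalities as a single computation that opens and closes with the squared $\mathcal{H}$-norm of the synthesis vector $T^{*}c=\sum_{n}c_{n}\varphi_{n}$, and to certify each link either by a definition or by one of the lemmas already in hand. First I would note that the leftmost quantity is literally $\|T^{*}c\|_{\mathcal{H}}^{2}$, since by definition $T^{*}c=\sum_{n}c_{n}\varphi_{n}$; this at once ties the far-left and far-right expressions together, so the genuine content is to show that both equal the Gramian quadratic form $\langle c, Gc\rangle_{2}$.

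Next I would expand $\|T^{*}c\|_{\mathcal{H}}^{2}=\langle T^{*}c, T^{*}c\rangle_{\mathcal{H}}$ by sesquilinearity of the inner product, producing the double sum $\sum_{m}\sum_{n}c_{m}c_{n}\langle\varphi_{m},\varphi_{n}\rangle_{\mathcal{H}}$, with the appropriate complex conjugation reinstated in the complex case. Recognizing $G_{nm}=\langle\varphi_{n},\varphi_{m}\rangle_{\mathcal{H}}$, this double sum is exactly $\langle c, Gc\rangle_{2}$, which (together with the notational identity $\langle c, Gc\rangle_{2}=c^{T}Gc$) supplies the middle block of equalities. To close the loop through the $TT^{*}$ form I would invoke Lemma \ref{L:2.1.1}, which gives $(TT^{*})_{i,j}=G_{i,j}$, whence $\langle c, Gc\rangle_{2}=\langle c, TT^{*}c\rangle_{2}$; then the adjoint relation (\ref{eq:2.2}) of Lemma \ref{L:2.1} lets me transfer one factor of $T^{*}$ across the inner product, $\langle c, TT^{*}c\rangle_{2}=\langle T^{*}c, T^{*}c\rangle_{\mathcal{H}}=\|T^{*}c\|_{\mathcal{H}}^{2}$, completing the cycle.

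The only step demanding care — and hence the main obstacle — is the legitimacy of the infinite-series manipulations: the convergence of $\sum_{n}c_{n}\varphi_{n}$ in $\mathcal{H}$ for arbitrary $c\in l^{2}$, and the interchange of the (double) summation with the continuous inner product. Here I would rely on the upper frame bound from (\ref{eq:2.4}), namely $\|T^{*}c\|_{\mathcal{H}}^{2}\le B_{2}\|c\|_{2}^{2}$, which guarantees that $T^{*}$ is a bounded operator $l^{2}\to\mathcal{H}$; consequently the series defining $T^{*}c$ converges in norm, and the continuity of $\langle\cdot,\cdot\rangle_{\mathcal{H}}$ justifies passing the summations through the inner product. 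With boundedness of $T^{*}$ secured, every equality in the chain is then either a definition, the sesquilinear expansion, the identification $G=TT^{*}$, or the adjoint relation, and no further estimate is required. Following the convention adopted elsewhere in the proof, one may verify the real case first and then reinstate the complex conjugations verbatim.
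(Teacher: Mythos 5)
Your proof is correct and matches the paper's approach: the paper in fact states Lemma \ref{L:2.3} with no separate proof environment, the displayed chain being its own justification via the definition of $T^{*}c=\sum_{n}c_{n}\varphi_{n}$, the sesquilinear expansion giving the Gramian form $\langle c,Gc\rangle_{2}$, the identification $TT^{*}=G$ of Lemma \ref{L:2.1.1}, and the adjoint relation (\ref{eq:2.2}) --- exactly the links you certify, in the same order. Your additional observation that the upper frame bound (\ref{eq:2.4}) makes $T^{*}$ bounded, so that $\sum_{n}c_{n}\varphi_{n}$ converges in norm for every $c\in l^{2}$ and the summation--inner-product interchange is legitimate, is a detail the paper leaves implicit rather than a departure from its argument.
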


\begin{corollary}
\label{C:2.2}
The Lax operator $L$ satisfies $Lf:=\sum_{n}({T^{*}}^{-1}f)_{n}\varphi_{n}(\cdot)$, 
for all $f \in \mathcal{H}$ and it is isometric $\mathcal{H} \rightarrow \mathcal{H}$.
\end{corollary}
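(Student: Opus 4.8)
The plan is to read the formula for the Lax operator off the identifications already in place, and then to obtain the isometry as an essentially immediate consequence of Lemma~\ref{L:2.3}. By Corollary~\ref{C:2.1} we already know $L=(T^{*}T)^{-1}$, and by construction $T^{*}\colon l^{2}\to\mathcal{H}$ is the synthesis operator $T^{*}c=\sum_{n}c_{n}\varphi_{n}$. Since $\{\varphi_{n}\}$ is a frame, $T$ is bounded below and $T^{*}$ is surjective with closed range, so $T^{*}$ admits a canonical generalized inverse $(T^{*})^{-1}\colon\mathcal{H}\to l^{2}$ sending $f$ to the minimal-norm coefficient sequence of its frame expansion. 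The first step is therefore to set $c:=(T^{*})^{-1}f$ and to record that $Lf=\sum_{n}c_{n}\varphi_{n}(\cdot)=\sum_{n}\big((T^{*})^{-1}f\big)_{n}\varphi_{n}(\cdot)$, which is exactly the asserted expression; consistency with $L=(T^{*}T)^{-1}$ is then checked by applying $T^{*}T$ to both sides and invoking (\ref{eq:2.3}).

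For the isometric statement I would invoke Lemma~\ref{L:2.3} directly. Writing $Lf=T^{*}c$ with $c=(T^{*})^{-1}f$, that lemma gives
\[
  \|Lf\|_{\mathcal{H}}^{2}=\Big\|\sum_{n}c_{n}\varphi_{n}\Big\|_{\mathcal{H}}^{2}
  =\langle c,Gc\rangle_{2}=\langle c,TT^{*}c\rangle_{2}=\|T^{*}c\|_{\mathcal{H}}^{2},
\]
where I have used $G=TT^{*}$ from Lemma~\ref{L:2.1.1} together with the adjoint relation (\ref{eq:2.2}). It then remains to collapse the right-hand side to $\|f\|_{\mathcal{H}}^{2}$ using the defining property $T^{*}c=f$ of the generalized inverse, which yields $\|Lf\|_{\mathcal{H}}=\|f\|_{\mathcal{H}}$, i.e.\ the isometry $\mathcal{H}\to\mathcal{H}$.

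The hard part is the one delicate point hidden in the symbol $(T^{*})^{-1}$: because a genuine (non-Riesz) frame has $\ker T^{*}\neq\{0\}$, the synthesis operator is not injective, so one must fix the generalized inverse unambiguously (the Moore--Penrose inverse $(T^{*})^{\dagger}=T(T^{*}T)^{-1}$, whose range lies in $(\ker T^{*})^{\perp}=\overline{\operatorname{ran}T}$, is the natural choice) and verify that $\sum_{n}c_{n}\varphi_{n}$ converges in $\mathcal{H}$ to the intended vector. I expect the cleanest formulation to be the Parseval normalization $B_{1}=B_{2}=1$, where $T^{*}T=I$, the $\psi_{n}$ of (\ref{eq:2.9}) coincide with the $\varphi_{n}$, and the two descriptions of $L$ agree on the nose; the general frame case should then follow by replacing $\varphi_{n}$ with the normalized tight frame $\psi_{n}=(T^{*}T)^{-1/2}\varphi_{n}$ of Lemma~\ref{L:2.2} and transporting the identity through the polar decomposition $T=u(T^{*}T)^{1/2}$ recorded just before Definition~\ref{D:2.1}. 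Reconciling the two formulas for $L$ — the synthesis expression above and $(T^{*}T)^{-1}$ — is the step I would treat with the most care, since that is where the choice of inverse and the normalization of the frame genuinely enter.
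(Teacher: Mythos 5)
The paper states this corollary without a proof of its own (it is placed immediately after Lemma~\ref{L:2.3}, which is evidently meant to supply it), so your attempt has to be judged against that intended route. Your argument has a genuine obstruction at its very first step. You set $c=(T^{*})^{-1}f$ and assert that $Lf=\sum_{n}c_{n}\varphi_{n}(\cdot)=T^{*}c$, while simultaneously keeping $L=(T^{*}T)^{-1}$ from Corollary~\ref{C:2.1}. But $T^{*}c=T^{*}(T^{*})^{-1}f=f$, so this identification forces $L=I_{\mathcal{H}}$, contradicting $L=(T^{*}T)^{-1}$ unless $T^{*}T=I$, i.e.\ unless the frame is Parseval. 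Your proposed consistency check exposes the problem rather than resolving it: applying $T^{*}T$ to $(T^{*}T)^{-1}f$ gives $f$, while applying it to $T^{*}c$ gives, via (\ref{eq:2.3}), $T^{*}Gc$, and equality would force $Gc=c$. The isometry claim suffers the same way: a positive selfadjoint operator which is isometric must equal the identity, so $(T^{*}T)^{-1}$ is isometric on $\mathcal{H}$ only in the Parseval case. No choice of generalized inverse or renormalization can repair this---any right inverse $S$ of $T^{*}$ satisfies $T^{*}Sf=f$ by definition---so the reconciliation you defer to the end (``the step I would treat with the most care'') is not a delicate point but an impossibility as you have set it up.

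The reading that makes the corollary non-trivial, and the one consistent with Theorem~\ref{T:2.1} and Lemma~\ref{L:2.3}, is that the sum $\sum_{n}((T^{*})^{-1}f)_{n}\varphi_{n}(\cdot)$ is to be taken pointwise on $\Omega$, i.e.\ as an element of the function realization $\mathcal{H}_{RK}$, not reassembled into $\mathcal{H}$ through $T^{*}$. The corollary's map is then the inverse of the isometric isomorphism $T_{G}$ of (\ref{eq:2.1.3}), carrying the abstract vector $f$ to its realization as a function on $\Omega$, and its isometry is exactly Lemma~\ref{L:2.3} evaluated at $c=(T^{*})^{-1}f$:
\[
  \Bigl\Vert \sum_{n}c_{n}\varphi_{n}(\cdot)\Bigr\Vert^{2}
  =\langle c,Gc\rangle_{2}
  =\Vert T^{*}c\Vert_{\mathcal{H}}^{2}
  =\Vert f\Vert_{\mathcal{H}}^{2}.
\]
Your second paragraph performs precisely this computation, so that half of your argument is sound once the target space is identified correctly; what must be dropped is the insistence that the displayed formula coincide with $(T^{*}T)^{-1}$ as an operator identity on $\mathcal{H}$. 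One further simplification: your concern about $\ker T^{*}\neq\{0\}$ and the Moore--Penrose inverse is moot in this paper, because the standing estimate (\ref{eq:2.4}) asserts $B_{1}\Vert c\Vert_{2}^{2}\leq\Vert T^{*}c\Vert_{\mathcal{H}}^{2}$ for \emph{all} $c\in l^{2}$; thus $T^{*}$ is injective (the paper is implicitly in the Riesz-basis setting, as it must be for $G^{-1}$ in (\ref{eq:2.1.21}) to exist as a bounded operator on all of $l^{2}$), and $(T^{*})^{-1}$ is a genuine two-sided inverse.
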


\begin{example}
\label{Ex:2.1}
In the theorem, we assume that the given Hilbert space $\mathcal{H}$ has a frame 
$\{\varphi_{n}\}\subset \mathcal{H}$ consisting of functions on a set $\Omega$.  So 
this entails a lower, and an upper frame bound, i.e., $0<B_{1}\leq B_{2}<\infty$. 

The following example shows that the conclusion in the theorem is false if there is 
not a positive lower frame-bound.

Set $\mathcal{H}=L^{2}(0,1)$, $\Omega=(0,1)$ the open unit-inbound, and 
$\varphi_{n}(t)=t^{n}$, $n \in \{0\}\cup\mathbb{N}=\mathbb{N}_{0}$.  In this case, 
the Gramian 
\[  
  G_{nm}=\int_{0}^{1}x^{n+m}dx=\frac{1}{n+m+1}
\]
is the $\infty \times \infty$ Hilbert matrix, see (\cite{Ro58}, \cite{Ka57}, 
    \cite{Ta49}).  In this 
case it is known that there is an upper frame bound $B_{2}=\pi$, i.e.,
\[
  \sum_{n=0}^{\infty}\left\vert \int_{0}^{1}f(x)x^{n}dx\right\vert^{2} \leq 
  \pi \int_{0}^{1}|f(x)|^{2}dx;
\]
in fact, for the operator-norm, we have 
\[
  \|G\|_{l^{2}\rightarrow l^{2}}=\pi;
\]
but there is not a lower frame bound.
Moreover, $G$ define a selfadjoint operator in $l^{2}(\mathbb{N}_{0})$ with spectrum 
$[0, \pi]=$ the closed interval.  This implies that there cannot be a positive lower 
frame-bound.

Moreover, it is immediate by inspection that $\mathcal{H}=L^{2}(0,1)$ is not a 
RKHS.
\end{example}

\section{Frames and Gaussian Processes}
\label{sec:3}

In \cite{AlJo12} and \cite{AlJoLe11}, it was shown that for every positive Borel 
measure $\sigma$ on $\mathbb{R}$ such that 
\begin{equation}
\label{eq:3.1}
  \int_{\mathbb{R}}\frac{d\sigma(u)}{1+u^{2}}< \infty,
\end{equation}
there is a unique (up to measure isomorphism) Gaussian proess $X$ as follows:
\begin{enumerate} [(i)]
  \item $X=X_{\varphi}$ is indexed by the Schwartz-space 
  $\mathcal{S}=\mathcal{S}(\mathbb{R})$ of function $\varphi$ on $\mathbb{R}$, 
  i.e., $\varphi \in \mathcal{S} \iff$
  $\varphi \in \mathbb{C}^{\infty}$, and for all $N, M \in \mathbb{N}$ we have
  \begin{equation}
    \label{eq:3.2}
    \underset{m \leq M}{max} \underset{x \in \mathbb{R}}{sup}\left\vert x^{N}
    \left(\frac{\partial}{\partial x}\right)^{m}\varphi(x)\right\vert < \infty
  \end{equation}
  with $\mathbb{E}(X_{\varphi})=0$, and 
  $\mathbb{E}(X_{\varphi}^{2})=\int_{\mathbb{R}}|\hat{\varphi}|^{2}d\sigma$ for all 
  $\varphi \in \mathcal{S}$.
  \item Let $\Omega:=\mathcal{S}'=$ the dual $=$ the Schwartz space of all tempered 
  distribution, then $X_{\varphi}$ is defined on $\mathcal{S}'$, by 
  \[
    X_{\varphi}(w)=w(\varphi), \quad \varphi \in \mathcal{S}, \quad w \in \mathcal{S}'.
  \]
  It is real valued Gaussian random variable.
  \item \cite{AlJo12}, \cite{AlJoLe11} there is a unique measure $\mathbb{P}=\mathbb{P}_{\sigma}$ on $\mathcal{S}'$ 
  such that 
  \begin{enumerate} [(a)]
    \item $X_{\varphi}$ is Gaussian for all $\varphi \in \mathcal{S}$,
    \item 
    \begin{equation}
    \label{eq:3.3}  
      \mathbb{E}(X_{\varphi})=0, \quad \text{and}
    \end{equation}
    \begin{equation}
    \label{eq:3.4}  
      \mathbb{E}_{\sigma}(e^{iX_{\varphi}})=\int_{\mathcal{S}'}e^{iX_{\varphi}}
      d\mathbb{P}_{\sigma}=e^{-\frac{1}{2}\|\widehat{\varphi}\|_{\sigma}^{2}}
      =e^{-\frac{1}{2}\int_{\mathbb{R}}|\widehat{\varphi}(u)|^{2}d\sigma(u)}
    \end{equation}
    where $\widehat{\varphi}=$ the Fourier transform,
    \begin{equation}
    \label{eq:3.5}  
      \widehat{\varphi}(u)=\int_{\mathbb{R}}e^{ixu}\varphi(x)dx.
    \end{equation}
  \end{enumerate}
\end{enumerate}

\begin{theorem}
\label{T:3.1}
Let $\{f_{n}\}_{n\in \mathbb{N}}$ be a real-valued frame in $L^{2}(\mathbb{R}, \sigma)$
$(=\{f \text{ on } \mathbb{R}, \text{ such that } \|f\|_{\sigma}^{2}:=
\int_{\mathbb{R}}|f(u)|^{2}d\sigma(u)<\infty\})$ with frame bounds $a$, $b$ such that
$0<a\leq b<\infty$, so 
\begin{equation}
\label{eq:3.6}
  a\|f\|_{\sigma}^{2} \leq \sum_{n \in \mathbb{N}}\left\vert 
  \int_{\mathbb{R}}f(u)f_{n}(u)d\sigma(u)\right\vert^{2} \leq b\|f\|_{\sigma}^{2}.
\end{equation}
Let $\{B_{n}\}_{n \in \mathbb{N}}$ be a system of i.i.d. (independent identically 
distributed) $N(0,1)$ Gaussian random variables on 
$(\Omega, \mathcal{F}, \mathbb{P}_{\sigma})$, and set
\begin{equation}
\label{eq:3.7}
  Y_{\varphi}(\cdot)=\sum_{n\in\mathbb{N}}\langle f_{n}, \widehat{\varphi}
  \rangle_{L^{2}(\sigma)}B_{n}(\cdot), \quad \text{(Karhunen-Lo\`{e}ve);}
\end{equation}
then
\begin{equation}
\label{eq:3.8}
  a\text{ }\mathbb{E}_{\sigma}\left(|X_{\varphi}|^{2} \right) \leq 
  \mathbb{E}_{\sigma}\left(|Y_{\varphi}|^{2} \right) \leq 
  b\text{ }\mathbb{E}_{\sigma}\left(|X_{\varphi}|^{2} \right). 
\end{equation}
\end{theorem}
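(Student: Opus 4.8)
The plan is to reduce the two-sided estimate (\ref{eq:3.8}) to the frame inequality (\ref{eq:3.6}) evaluated at the single function $f=\widehat{\varphi}$. The right-hand factor is already identified: by the defining property of the process $X$ in item (i) of Section~\ref{sec:3} (equivalently, from the characteristic function (\ref{eq:3.4})), one has $\mathbb{E}_{\sigma}(|X_{\varphi}|^{2})=\|\widehat{\varphi}\|_{\sigma}^{2}=\int_{\mathbb{R}}|\widehat{\varphi}(u)|^{2}\,d\sigma(u)$. Thus it suffices to evaluate $\mathbb{E}_{\sigma}(|Y_{\varphi}|^{2})$ and to sandwich it between $a\|\widehat{\varphi}\|_{\sigma}^{2}$ and $b\|\widehat{\varphi}\|_{\sigma}^{2}$.

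First I would check that $Y_{\varphi}$ in (\ref{eq:3.7}) is genuinely a well-defined element of $L^{2}(\Omega,\mathbb{P}_{\sigma})$. Writing $c_{n}:=\langle f_{n},\widehat{\varphi}\rangle_{L^{2}(\sigma)}$, the upper frame bound in (\ref{eq:3.6}) gives $\sum_{n}|c_{n}|^{2}\leq b\|\widehat{\varphi}\|_{\sigma}^{2}<\infty$, where finiteness of $\|\widehat{\varphi}\|_{\sigma}^{2}$ follows from the rapid decay of $\widehat{\varphi}\in\mathcal{S}$ combined with the moment condition (\ref{eq:3.1}). Hence the coefficient sequence lies in $l^{2}$, and since $\mathbb{E}\big[(\sum_{M<n\leq N}c_{n}B_{n})^{2}\big]=\sum_{M<n\leq N}|c_{n}|^{2}$ by the independence and $N(0,1)$ normalization of the $B_{n}$, the partial sums form a Cauchy sequence in $L^{2}(\mathbb{P}_{\sigma})$, so the series (\ref{eq:3.7}) converges there.

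Next I would compute the variance. Using $\mathbb{E}(B_{n}B_{m})=\delta_{n,m}$ and passing the (now justified) limit through the expectation,
\[
  \mathbb{E}_{\sigma}(|Y_{\varphi}|^{2})
  =\sum_{n}\sum_{m}c_{n}\overline{c_{m}}\,\mathbb{E}(B_{n}B_{m})
  =\sum_{n}|c_{n}|^{2}
  =\sum_{n}\left\vert\int_{\mathbb{R}}f_{n}(u)\widehat{\varphi}(u)\,d\sigma(u)\right\vert^{2}.
\]
Finally I would invoke (\ref{eq:3.6}) with $f=\widehat{\varphi}$. Since (\ref{eq:3.6}) is stated for real-valued $f$ whereas $\widehat{\varphi}$ is in general complex, I would first extend the frame bounds to the complexification: writing $\widehat{\varphi}=g+ih$ with $g,h$ real and applying (\ref{eq:3.6}) to $g$ and to $h$ separately, then adding, one gets the same bounds $a,b$ because $|c_{n}|^{2}=\langle f_{n},g\rangle_{\sigma}^{2}+\langle f_{n},h\rangle_{\sigma}^{2}$ and $\|\widehat{\varphi}\|_{\sigma}^{2}=\|g\|_{\sigma}^{2}+\|h\|_{\sigma}^{2}$. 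This gives $a\|\widehat{\varphi}\|_{\sigma}^{2}\leq\mathbb{E}_{\sigma}(|Y_{\varphi}|^{2})\leq b\|\widehat{\varphi}\|_{\sigma}^{2}$, which is exactly (\ref{eq:3.8}) after the substitution $\|\widehat{\varphi}\|_{\sigma}^{2}=\mathbb{E}_{\sigma}(|X_{\varphi}|^{2})$.

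The algebra is routine; the only points demanding genuine care — and hence where I expect the real work to sit — are the interchange of the infinite summation with the expectation (handled by the $L^{2}(\mathbb{P}_{\sigma})$-convergence argument driven by the upper frame bound) and the passage from the real-valued frame hypothesis in (\ref{eq:3.6}) to the complex-valued test function $\widehat{\varphi}$. Neither step is deep, but both must be made explicit for the chain of equalities and inequalities to be valid.
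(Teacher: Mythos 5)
Your proof is correct and follows essentially the same route as the paper's: compute $\mathbb{E}_{\sigma}(|Y_{\varphi}|^{2})=\sum_{n}|\langle f_{n},\widehat{\varphi}\rangle_{L^{2}(\sigma)}|^{2}$ from the i.i.d.\ $N(0,1)$ property, apply the frame inequality (\ref{eq:3.6}) at $f=\widehat{\varphi}$, and identify $\mathbb{E}_{\sigma}(|X_{\varphi}|^{2})=\|\widehat{\varphi}\|_{\sigma}^{2}$ via (\ref{eq:3.4}). The two points you flag as needing care---the $L^{2}(\mathbb{P}_{\sigma})$-convergence of the series (\ref{eq:3.7}) and the extension of the real-valued frame bounds to the complex function $\widehat{\varphi}$ by splitting into real and imaginary parts---are details the paper leaves implicit, and your handling of both is sound.
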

\begin{proof}
Using the i.i.d. $N(0,1)$ property of $\{B_{n}\}_{n\in \mathbb{N}}$, we get 
\begin{equation}
\label{eq:3.9}
  \mathbb{E}_{\sigma}\left(|Y_{\varphi}|^{2} \right)=\sum_{n\in\mathbb{N}}
  |\langle f_{n}, \widehat{\varphi} \rangle_{L^{2}(\sigma)}|^{2}.
\end{equation}
The desired conclusion (\ref{eq:3.8}) now follows from (\ref{eq:3.6}) combined with 
\begin{equation}
\label{eq:3.10}
  \mathbb{E}_{\sigma}\left(|X_{\varphi}|^{2} \right)
  =\|\widehat{\varphi}\|_{L^{2}(\sigma)}^{2}, \quad \text{see \cite{AlJo12}, 
    \cite{AlJoLe11}}
\end{equation}
while (\ref{eq:3.10}) is immediate from (\ref{eq:3.4}).
\end{proof}

\begin{corollary}
\label{C:3.1}
The property for $\{Y_{\varphi}\}_{\varphi \in \mathcal{S}}$ in (\ref{eq:3.7}) agrees 
with $\{X_{\varphi}\}_{\varphi \in \mathcal{S}}$ if and only if 
$\{f_{n}\}_{n \in \mathbb{N}}$ is a Parseval frame in $L^{2}(\sigma)$.
\end{corollary}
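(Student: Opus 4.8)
The plan is to read ``agrees with'' as equality of the two centered Gaussian families in distribution, and to reduce this distributional statement to a pointwise-in-$\varphi$ identity of second moments, which is then recognized as a Parseval condition tested on the range of the Fourier transform. Both $\{X_\varphi\}$ and $\{Y_\varphi\}$ are centered Gaussian and depend linearly on $\varphi$ (for $X$ because $X_\varphi(w)=w(\varphi)$, and for $Y$ because $\widehat{\varphi}$ and the inner product in (\ref{eq:3.7}) are linear in $\varphi$). Hence each family is determined, up to joint law, by its covariance, and the covariance is recovered from the variances $\varphi\mapsto\mathbb{E}_\sigma(|X_\varphi|^2)$ and $\varphi\mapsto\mathbb{E}_\sigma(|Y_\varphi|^2)$ by polarization applied to $\varphi,\psi,\varphi+\psi\in\mathcal{S}$. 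Therefore the two processes agree if and only if
\[
  \mathbb{E}_\sigma\!\left(|Y_\varphi|^2\right)=\mathbb{E}_\sigma\!\left(|X_\varphi|^2\right),\qquad \forall\,\varphi\in\mathcal{S}.
\]

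First I would substitute the explicit second-moment formulas already recorded in the proof of Theorem \ref{T:3.1}: by (\ref{eq:3.9}) the left side equals $\sum_{n}|\langle f_n,\widehat{\varphi}\rangle_{L^2(\sigma)}|^2$, while by (\ref{eq:3.10}) the right side equals $\|\widehat{\varphi}\|_{L^2(\sigma)}^2$. Thus agreement is equivalent to
\[
  \sum_{n}\left|\langle f_n,\widehat{\varphi}\rangle_{L^2(\sigma)}\right|^2=\|\widehat{\varphi}\|_{L^2(\sigma)}^2,\qquad \forall\,\varphi\in\mathcal{S}.
\]
The sufficiency direction is then immediate: if $\{f_n\}$ is a Parseval frame in $L^2(\sigma)$, this identity holds for every $g\in L^2(\sigma)$, in particular for $g=\widehat{\varphi}$, so the variances match and the processes agree.

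For the necessity direction I would argue by density and continuity. Writing $g=\widehat{\varphi}$, I note that as $\varphi$ ranges over $\mathcal{S}$ the functions $\widehat{\varphi}$ range over a dense subset of $L^2(\sigma)$: the Fourier transform maps $\mathcal{S}$ onto $\mathcal{S}$, and $\mathcal{S}$ is dense in $L^2(\sigma)$ because $\sigma$ satisfies the tempered-growth condition (\ref{eq:3.1}). Both functionals $g\mapsto\sum_{n}|\langle f_n,g\rangle_{L^2(\sigma)}|^2$ and $g\mapsto\|g\|_{L^2(\sigma)}^2$ are continuous on $L^2(\sigma)$ — the first because the upper frame bound $b<\infty$ in (\ref{eq:3.6}) makes it a bounded quadratic form — so an identity valid on a dense set extends to all of $L^2(\sigma)$. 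This yields the full Parseval identity, i.e.\ frame bounds $a=b=1$.

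The step I expect to be the main obstacle is precisely this final extension. I must verify that the Fourier images $\{\widehat{\varphi}\}$ are genuinely \emph{total} in $L^2(\sigma)$ as a complex Hilbert space, so that matching the two bounded quadratic forms on them forces the underlying frame operator to equal the identity; in particular I must check that the real-valued nature of the $f_n$ together with the Hermitian symmetry of $\widehat{\varphi}$ for real $\varphi$ leaves no untested subspace (this is handled by decomposing into even/odd and real/imaginary parts, or by complexifying the index $\varphi$). Establishing this totality carefully, rather than the polarization bookkeeping, is where the real work lies.
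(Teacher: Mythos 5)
Your forward implication is fine, but note first that the paper's own ``proof'' of Corollary \ref{C:3.1} is a one-line appeal to the Karhunen--Lo\`{e}ve theorem and the cited references, so any detailed argument is necessarily your own construction; the question is whether your converse closes. It does not, and the failure is exactly at the step you flagged as the main obstacle. In this paper $X_{\varphi}(w)=w(\varphi)$ is a \emph{real-valued} Gaussian variable, so the index set $\mathcal{S}$ consists of real-valued Schwartz functions. Consequently $\{\widehat{\varphi}:\varphi\in\mathcal{S}\}$ is \emph{not} dense in the complex Hilbert space $L^{2}(\sigma)$: it is the real-linear set of Hermitian-symmetric functions $g(-u)=\overline{g(u)}$, whose complex span is dense but which need not itself be dense. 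Equality of two bounded quadratic forms on such a real-linear set does not force equality of the underlying operators. Writing $S=\sum_{n}\langle f_{n},\cdot\rangle f_{n}$ for the frame operator and $A=S-I$, variance agreement gives only $\langle g,Ag\rangle=0$ for Hermitian-symmetric $g$; whenever $\sigma$ is symmetric, this is satisfied by every $A$ given by multiplication by a bounded real \emph{odd} function, since then $\int\alpha(u)|g(u)|^{2}\,d\sigma(u)=0$ because $|g|^{2}$ is even.

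Concretely, take $\sigma=\delta_{-1}+\delta_{1}$ (which satisfies (\ref{eq:3.1})), fix $0<\alpha<1$, and set $f_{1}=\sqrt{1+\alpha}\,\chi_{\{1\}}$, $f_{2}=\sqrt{1-\alpha}\,\chi_{\{-1\}}$, with the remaining frame vectors zero (or repeat the construction over infinitely many symmetric atoms if one insists on nonzero vectors). This is a real frame in $L^{2}(\sigma)$ with bounds $1-\alpha$ and $1+\alpha$, and it is not Parseval; yet for every real $\varphi\in\mathcal{S}$, using $\widehat{\varphi}(-1)=\overline{\widehat{\varphi}(1)}$,
\[
  \sum_{n}\bigl|\langle f_{n},\widehat{\varphi}\rangle_{L^{2}(\sigma)}\bigr|^{2}
  =(1+\alpha)\,|\widehat{\varphi}(1)|^{2}+(1-\alpha)\,|\widehat{\varphi}(-1)|^{2}
  =2\,|\widehat{\varphi}(1)|^{2}
  =\|\widehat{\varphi}\|_{L^{2}(\sigma)}^{2},
\]
so by (\ref{eq:3.9}) and (\ref{eq:3.10}) the variances of $\{Y_{\varphi}\}$ and $\{X_{\varphi}\}$ (and whatever covariance data your polarization step recovers from them) coincide for all $\varphi\in\mathcal{S}$, although $\{f_{n}\}$ is not Parseval. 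Thus ``agreement $\Rightarrow$ Parseval'' fails as you have set it up. Neither of your proposed repairs closes the gap: the even/odd, real/imaginary decomposition shows only that the complex span of the test set is dense, while the quadratic form restricted to the Hermitian-symmetric set is blind to the odd part of $S-I$; and complexifying the index $\varphi$ changes the hypothesis of the corollary --- it is precisely the stronger statement tested on complex functions that is equivalent to Parseval. (Reading ``agrees'' as strict equality in law does not rescue the statement either, since $Y_{\varphi}$ is genuinely complex-valued whenever some $\langle f_{n},\widehat{\varphi}\rangle\notin\mathbb{R}$, while $X_{\varphi}$ is real, so then even the forward direction would fail.) What your argument actually proves is: Parseval $\Rightarrow$ agreement, and agreement $\Rightarrow$ the Parseval identity on the Hermitian-symmetric part of $L^{2}(\sigma)$, which is strictly weaker.
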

\begin{proof}
This follows from the Karhunen-Lo\`{e}ve theorem; see 
\cite{JoSo07}, \cite{AlJo12}, \cite{AlJoLe11}.
\end{proof}

\bibliographystyle{spmpsci}      

\begin{thebibliography}{}

\bibitem{AAMN13}
Omar Abu Arqub, Mohammed Al-Smadi, and Nabil Shawagfeh.
\newblock Solving Fredholm integro-differential equations using reproducing
  kernel Hilbert space method.
\newblock {\em Appl. Math. Comput.}, 219(17):8938--8948, (2013)

\bibitem{AlJo12}
Daniel Alpay and Palle E.~T. Jorgensen.
\newblock Stochastic processes induced by singular operators.
\newblock {\em Numer. Funct. Anal. Optim.}, 33(7-9):708--735, (2012)

\bibitem{AlJoLe11}
Daniel Alpay, Palle Jorgensen, and David Levanony.
\newblock A class of {G}aussian processes with fractional spectral measures.
\newblock {\em J. Funct. Anal.}, 261(2):507--541, (2011)

\bibitem{Aro50}
N. Aronszajn.
\newblock Theory of reproducing kernels.
\newblock {\em Trans. Amer. Math. Soc.}, 68:337--404, (1950)

\bibitem{CaCh98}
Peter G. Casazza and Ole Christensen.
\newblock Frames containing a {R}iesz basis and preservation of this property
  under perturbations.
\newblock {\em SIAM J. Math. Anal.}, 29(1):266--278 (electronic), (1998)

\bibitem{Chr03}
Ole Christensen.
\newblock {\em An Introduction to Frames and {R}iesz Bases}.
\newblock Applied and Numerical Harmonic Analysis. Birkh\"auser Boston Inc.,
  Boston, MA, (2003)

\bibitem{CaKu04}
Peter~G. Casazza and Gitta Kutyniok.
\newblock Frames of subspaces.
\newblock In {\em Wavelets, frames and operator theory}, volume 345 of {\em
  Contemp. Math.}, pages 87--113. Amer. Math. Soc., Providence, RI, (2004)

\bibitem{HJLLM13}
Deguang Han, Wu~Jing, David Larson, Pengtong Li, and Ram~N. Mohapatra.
\newblock Dilation of dual frame pairs in {H}ilbert {$C^*$}-modules.
\newblock {\em Results Math.}, 63(1-2):241--250, (2013)

\bibitem{JoSo07}
Palle E.~T. Jorgensen and Myung-Sin Song.
\newblock Entropy encoding, hilbert space, and karhunen-lo\`{e}ve transforms.
\newblock {\em Journal of Mathematical Physics}, 48(10):103503, (2007)


\bibitem{Ka57}
Tosio Kato.
\newblock On the {H}ilbert matrix.
\newblock {\em Proc. Amer. Math. Soc.}, 8:73--81, (1957)

\bibitem{La02}
Peter~D. Lax.
\newblock {\em Functional analysis}.
\newblock Pure and Applied Mathematics (New York). Wiley-Interscience [John
  Wiley \& Sons], New York, (2002)

\bibitem{NACLC12}
Alvaro Nosedal-Sanchez, Curtis~B. Storlie, Thomas C.~M. Lee, and Ronald
  Christensen.
\newblock Reproducing kernel {H}ilbert spaces for penalized regression: a
  tutorial.
\newblock {\em Amer. Statist.}, 66(1):50--60, (2012)

\bibitem{Ro58}
Marvin Rosenblum.
\newblock On the {H}ilbert matrix. {II}.
\newblock {\em Proc. Amer. Math. Soc.}, 9:581--585, (1958)

\bibitem{Ta49}
Olga Taussky.
\newblock A remark concerning the characteristic roots of the finite segments
  of the {H}ilbert matrix.
\newblock {\em Quart. J. Math., Oxford Ser.}, 20:80--83, (1949)

\bibitem{Wat12}
Shigeru Watanabe.
\newblock Jacobi polynomials and associated reproducing kernel {H}ilbert
  spaces.
\newblock {\em J. Math. Anal. Appl.}, 389(1):108--118, (2012)

\bibitem{Yam13}
Akira Yamada.
\newblock Inequalities for gram matrices and their applications to reproducing
  kernel {H}ilbert spaces.
\newblock {\em Taiwanese J. Math.}, 17(2):427--430, (2013)

%
\end{thebibliography}


\end{document}